\documentclass[11pt]{article}
\usepackage{amsmath,amssymb,amsthm,mathtools,hyperref}
\usepackage[margin=1.1in]{geometry}
\usepackage{xcolor}

\newtheorem{theorem}{Theorem}[section]
\newtheorem{lemma}[theorem]{Lemma}
\newtheorem{proposition}[theorem]{Proposition}

\theoremstyle{definition}
\newtheorem{definition}[theorem]{Definition}
\theoremstyle{remark}

\DeclareMathOperator{\parr}{par}
\DeclareMathOperator{\sib}{sib}

\newcommand{\rise}[2]{#1^{\overline{#2}}}
\newcommand{\TC}{\text{TC}}

\title{A Short Combinatorial Proof of the Pons--Batle Identity for Counting Tree-Child Networks}
\author{Hao Yu and Louxin Zhang\thanks{Department of Mathematics, National University of Singapore, Singapore 119076, Singapore. This work was partially supported by a Singapore MOE grant (A-8001951-00-00)}}
\date{}

\begin{document}
\maketitle

\begin{abstract}
Tree-child networks are a useful class of binary phylogenetic networks.
The Pons--Batle identity (Pons and Batle, \textit{Scientific Reports}, 2021)
states that the number $a_{n,k}$ of tree-child networks with $k$
reticulations on $n$ taxa satisfies
\[
a_{n,k}=(n-k+1)a_{n,k-1}
+\frac{n(2n+k-3)}{n-k}a_{n-1,k}.
\]
In this paper, we present a short combinatorial proof of this identity.
\end{abstract}

\section{Introduction}

With an increasing number of genomes becoming available, phylogenetic networks have been increasingly used in place of phylogenetic trees to model genome evolution, where reticulate genetic transfer events are common. Consequently, a rich mathematical theory of phylogenetic networks has been developed in the past two decades \cite{huson_book, steel_book}. In particular, tree-child networks \cite{Cardona_09b} and several other classes of phylogenetic networks have been extensively studied \cite{huson_book, Zhang2019clusters}. Tree-child networks are useful in designing algorithms for inferring phylogenetic networks \cite{zhang2026phylofusion} and in visualizing structural variation among phylogenetic trees.

To count tree-child networks with  $k$ reticulations on  $n$ taxa, Pons and Batle conjectured that the count $a_{n, k}$ satisfies the following identity \cite{Conjecture_Pons}:
\begin{eqnarray}
a_{n,k}=(n-k+1)a_{n,k-1}
+\frac{n(2n+k-3)}{n-k}a_{n-1,k}.
\label{conjecture}
\end{eqnarray}
A long and complicated proof of this elegant identity was given by Lin {\it et al.} \cite{Conjecture_proof} through the enumeration of Young tableaux with walls \cite{liu2026combinatorial}. More recently, the Pons--Batle identity has been used in the asymptotic enumeration of binary phylogenetic networks without constraints, normal networks, and galled networks \cite{YFuchs_2026,YZhang_2026}.

In this paper, we present a short combinatorial proof of the Pons--Batle identity by introducing a novel root-decomposition technique for counting tree-child networks.

\section{Preliminaries}\label{sec:1}

 A \textit{binary phylogenetic network}  on a set of taxa  is a rooted directed acyclic graph with no parallel edges consisting of:
\begin{itemize}
    \item a unique {\it root} of indegree 0 and outdegree 1,
    \item $n$ \textit{leaves} of indegree 1 and outdegree 0, labeled one-to-one with the taxa, 
    \item {\it tree nodes} are of indegree 1 and outdegree 2, and
    \item {\it reticulation nodes} of indegree 2 and outdegree 1. 
\end{itemize}
For a binary phylogenetic network $N$, we use $L(N), T(N)$ and $R(N)$ to denote the sets of labeled leaves,  tree nodes and  reticulation nodes, respectively. We also let $V(N)$ denote the set of all vertices in $N$, i.e., $V(N)=L(N)\cup T(N)\cup R(N)$,  and let
$E(N)$ denote the set of directed edges.

Let $u,v\in V(N)$. We call $u$ a \textit{parent} of $v$ and $v$ a
\textit{child} of $u$ if $(u,v)\in E(N)$. Two nodes $u$ and $v$ are
called \textit{siblings} if they have a common parent. We say that $v$
is \textit{reachable} from $u$ if there is a directed path from $u$ to
$v$ consisting of at least one edge.

 Let $(u, v)\in E(N)$. It is called a \textit{reticulation edge} if
 $v\in R(N)$ and  a \textit{tree edge} if $v\in T(N)\cup L(N)$.


\begin{figure}[!t]
    \centering
    \includegraphics[width=0.2\textwidth]{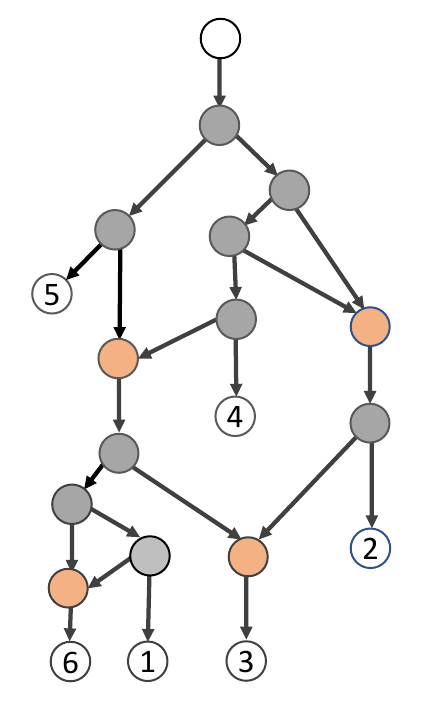}
    \caption{A  tree-child network with four reticulations on taxa $\{1, 2, 3, 4, 5, 6\}$.
    \label{fig1}
    }
\end{figure}

A binary phylogenetic network is called a \emph{tree-child network} if every non-leaf node has a child which is a leaf or a tree node (Figure~\ref{fig1}). We remark that a tree-child network on a single taxon consists only of a root and a leaf.

Let $N$ be a tree-child network. 
The root of $N$ is a {\it free} node. A tree node of $N$ is {\it free} if its two children are  not reticulation nodes. In the tree-child network in Figure~\ref{fig1}, only the top tree node is free.  The edges from a free node to its children are called \textit{free edges}. Then, we have the following simple facts, whose proofs are omitted.

\begin{proposition}\label{prop21}
Let $N$ be a tree-child network with $k$ reticulations on a set of $n$ taxa.
Then $N$ has:
\begin{itemize}
\item $n+k-1$ tree nodes,
\item $n-k-1$ free tree nodes, and
\item $2n+k-1$ tree edges.
\end{itemize}
Furthermore, $0\leq k<n$.
\end{proposition}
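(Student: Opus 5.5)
The plan is to count nodes and edges by degree, and then use the tree-child condition to pin down how tree nodes and reticulations interact.

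Let $t=|T(N)|$. Every edge is counted once at its tail and once at its head. Counting at tails, the number of edges is $1+2t+k$, since the root has outdegree 1, each tree node outdegree 2, and each reticulation outdegree 1. Counting at heads, it is $n+t+2k$, since each leaf and tree node has indegree 1 and each reticulation indegree 2. Equating the two counts gives $t=n+k-1$.

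Tree edges are exactly the edges whose head is a leaf or a tree node. Each such node has indegree 1, so there are $n+t=2n+k-1$ tree edges.

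For the free tree nodes, use the tree-child condition. Every reticulation $r$ has two parents. A parent of $r$ cannot be another reticulation: a reticulation has outdegree 1, so its only child would be $r$, violating the tree-child condition. The root cannot be a parent of $r$ either, for the same reason. So both parents of $r$ are tree nodes. A tree node cannot have two reticulation children, again by the tree-child condition. It also cannot be a parent of the same reticulation twice, since there are no parallel edges. Hence the $2k$ reticulation edges have $2k$ distinct tail tree nodes. The tree nodes with a reticulation child are therefore exactly $2k$ in number, and the free tree nodes number $t-2k=n-k-1$.

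Finally, $k\ge 0$ is trivial. From $n-k-1\ge 0$ we get $k\le n-1$, i.e.\ $k<n$. This last step only needs the count of free tree nodes to be nonnegative, which is automatic.

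The main obstacle is the free-node count: one has to argue carefully that each reticulation edge is matched to a distinct tree node. This requires ruling out parents that are the root or a reticulation, and ruling out a tree node feeding the same reticulation twice; both are handled above.
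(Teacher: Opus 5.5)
Your proof is correct and complete. The handshake count $1+2t+k=n+t+2k$ gives $t=n+k-1$, and the indegree-one heads give the $2n+k-1$ tree edges. The $2k$ reticulation edges have distinct tail tree nodes, because the tree-child condition rules out reticulation or root parents and tree nodes with two reticulation children, and the absence of parallel edges rules out a double edge; this yields exactly $n-k-1$ free tree nodes and hence $k<n$.

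The paper omits the proof of this proposition as a simple fact, so there is no written argument to compare against; yours is the standard one. Leaves are tacitly excluded as parents of a reticulation because they have outdegree $0$, which is harmless.
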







\begin{figure}[!b]
    \centering
    \includegraphics[width=0.4\textwidth]{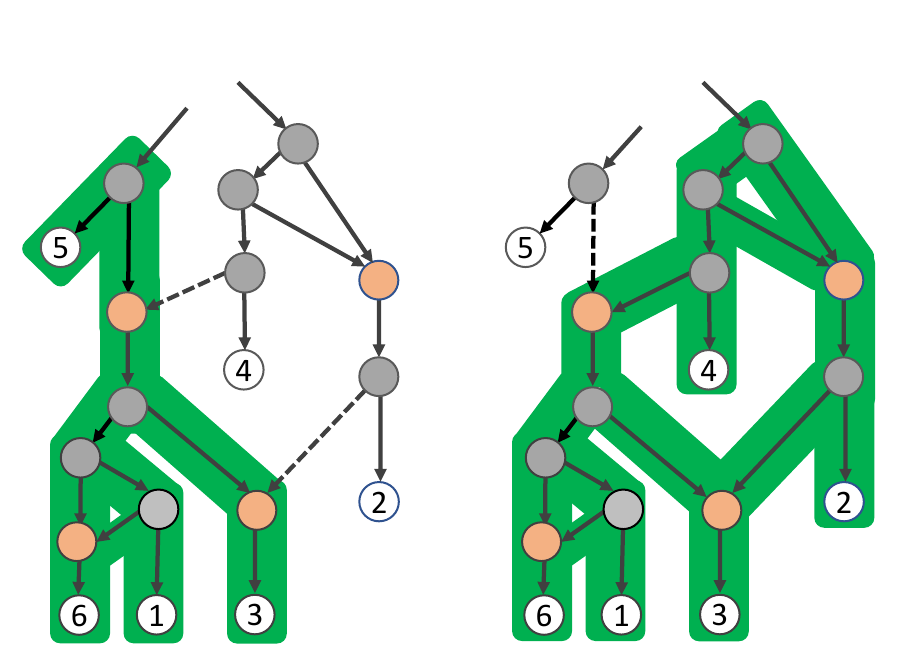}
    \caption{The two possible root splits of the tree-child network in Figure 1. In each root split,  the target subnetwork is colored green and the source subnetwork is the remaining part, where their roots are omitted for clarity.
    \label{fig:NtkExamples}
    }
\end{figure}

\section{Root Decomposition}\label{sec:split}

We use $\TC(n,k)$ to denote the set of tree-child networks with $k$ reticulations on a set of $n$ taxa, and define
$$
a_{n,k}:=|\TC(n,k)|.
$$
By convention, we set $a_{n,k}=0$ unless $n\geq 1$ and $0\leq k<n$. In this section, we establish a recurrence formula for computing $a_{n,k}$ by partitioning a tree-child network into three components.

Throughout this section, we consider $N\in\TC(n,k)$ with $n\geq 2$, and let $w$ denote the unique child of the root.

A node $v$ is said to be reachable from another node $u$ in $N$ if there is a directed path from $u$ to $v$. For a node $u$, define
$$
D(u)=\{u\}\cup\{v: v\text{ is reachable from }u\}.
$$

We define the subnetwork $N_u$ to be the subnetwork induced by $D(u)$; that is, $N_u$ has node set $D(u)$ and directed edge set
$$
\{(x,v)\in E(N): x,v\in D(u)\}.
$$

\begin{definition}[{\bf Root split}]
By the tree-child property, $w$ has at most one reticulation child,
so exactly one of the following holds.
\begin{itemize}
\item[\textbf{T}] Both children of $w$ are non-reticulation nodes.
\item[\textbf{R}] One child of $w$ is a reticulation $\rho$.
Let $y$ be the other child of $w$.
\end{itemize}

In case \textbf{T}, choose one child $x$ of $w$ as the \textit{target node},
and let $y$ be the other child. Define the target part by
\[
P_{tgt}=N_x,
\]
the source part by
\[
P_{src}=\text{the subnetwork induced by }D(y)\setminus D(x),
\]
and the cross edge set by
\[
C_{\rightarrow}
=\{(u,v)\in E(N):u\in D(y)\setminus D(x),\ v\in D(x)\}.
\]
The triple $(P_{src},P_{tgt},C_{\rightarrow})$ is called a \textit{root split}.

In case \textbf{R}, choose the reticulation child $\rho$ of $w$ as the
target node and $y$ as the source node. The root split is defined in
the same way, with $x=\rho$.

Thus, in case \textbf{T}, two distinct root splits of $N$ are obtained,
whereas in case \textbf{R}, only one root split of $N$ is obtained.
\end{definition}

\begin{lemma}[The root split is legal]\label{lem:legal} Each root split of $N$ has the following properties: 
\begin{enumerate} 
\item[\rm(a)] both $P_{tgt}$ and $P_{src}$ are non-empty; 
\item[\rm(b)] there is no directed edge from a node in $P_{tgt}$ to a node in $P_{src}$; 
\item[\rm(c)] if $C_{\rightarrow}$ is non-empty, each of its edges is a reticulation edge in $N$;
\item[\rm(d)] suppressing every node of indegree 1 and outdegree 1 in $P_{src}$ and $P_{tgt}$ results in tree-child networks $A_{src}\in \TC(m,i)$ and $A_{tgt}\in \TC(p,j)$, respectively, where $m\ge1$ and $p\ge1$ such that $m+p=n$; \item[\rm(e)] Let $c=\vert C_{\rightarrow}\vert$. Then, $i+j+c=k$. In addition,  $0\leq c\leq p-j-1$ in case \textbf{T}, whereas $0\leq c\leq p-j$ in case \textbf{R}.
\end{enumerate} 
\end{lemma}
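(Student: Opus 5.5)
Since $w$ is the only child of the root, every node other than the root lies in $D(w)=\{w\}\cup D(x)\cup D(y)$. Hence the non-root nodes below $w$ are partitioned into $D(x)$ and $D(y)\setminus D(x)$. I will prove the five parts in order, each from the tree-child property and the definition of $D(\cdot)$.

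For (a), $P_{tgt}$ contains $x$. For $P_{src}$, I show that $y\notin D(x)$, i.e.\ $y$ is not reachable from $x$. In case \textbf{T}, if $y$ were reachable from $x$, then $y$ would have indegree at least $2$: one parent is $w$, the other lies on the path from $x$, and $w\notin D(x)$ by acyclicity. This contradicts that $y$ is a tree node or leaf. In case \textbf{R}, $y$ is a non-reticulation by the tree-child property at $w$, so the same argument applies. Part (b) is immediate, since any edge leaving a node of $D(x)$ ends in $D(x)$.

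For (c), let $(u,v)\in C_{\rightarrow}$, so $v\in D(x)$ and $u\notin D(x)$. If $v=x$, then $x$ has parents $w$ and $u\neq w$, so $x$ is a reticulation. This forces case \textbf{R}, and the edge is a reticulation edge. If $v\neq x$, then $v$ has a parent inside $D(x)$ on a path from $x$, and also the parent $u$ outside it. So $v$ has indegree $2$ and is a reticulation.

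For (d), I first identify the nodes whose degree changes. In $P_{tgt}$, only heads of cross edges lose an in-edge. In $P_{src}$, only tails of cross edges lose an out-edge, and $y$ serves as the root of $P_{src}$ (with $x$ as the root of $P_{tgt}$), after attaching a new root above each. Each cross-edge head becomes indegree $1$/outdegree $1$ and is suppressed. Each cross-edge tail is a tree node, since reticulations have one child and that child's edge would then be the cross edge into $v$; such a tail likewise becomes $1$-in/$1$-out and is suppressed. When $x=\rho$ (case \textbf{R}), $x$ itself is outdegree $1$ with indegree $0$, so it plays the role of root.

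The main obstacle is verifying that the suppressed graphs are genuinely tree-child: no parallel edges, and every non-leaf has a tree or leaf child. Tree-childness transfers along suppressed paths, since a node's non-reticulation child $c$ either survives or is suppressed; in the latter case $c$ is a former reticulation whose child must be a tree node, by the tree-child condition applied to $c$ in $N$. Parallel edges cannot arise, because they would require a tree node whose two children are a reticulation and its path-predecessor, contradicting the tree-child condition in $N$. The leaves split as $m+p=n$, with $m,p\ge1$ since each nonempty part ends in a leaf.

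For (e), each reticulation of $N$ is either a cross-edge head, which is suppressed, or has both parents in the same part and remains a reticulation there. This gives $i+j+c=k$; in case \textbf{R}, $\rho$ is a cross-edge head. For the bound on $c$, I apply Proposition~\ref{prop21} to $A_{tgt}$, which has $p-j-1$ free tree nodes plus a free root. Each cross-edge head subdivides a distinct edge of $A_{tgt}$, and tree-childness of $N$ forces these subdivided edges to be free edges, with at most one subdivision per free node pair. In case \textbf{T}, the root edge of $A_{tgt}$ cannot be subdivided since $x$ is a tree node, giving $c\le p-j-1$. In case \textbf{R} it can, giving one extra and $c\le p-j$. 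Making this injection into free nodes precise is the step I expect to need the most care.
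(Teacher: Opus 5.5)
Your proposal is correct and follows essentially the paper's route. The ingredients are the same: $y\notin D(x)$ by acyclicity, closure of $D(x)$ under children, the indegree argument for cross-edge heads, suppression preserving tree-childness with the leaves partitioned between the two parts, and an injection of cross-edge heads into free nodes of $A_{tgt}$ via their parents, with $\rho$ taking the free root in case \textbf{R}. You are in fact more careful than the paper on the case $v=x$ in (c), on parallel edges, and on actually proving $c\le p-j$ in case \textbf{R}.

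One sentence in (d) needs correcting, where you say a suppressed non-reticulation child is a former reticulation. On the source side the suppressed nodes are cross-edge tails, which are tree nodes. The correct justification there is that a tail's other child is a non-reticulation by the tree-child condition at the tail, since its cross child is a reticulation by (c); this is applied repeatedly along chains of consecutive tails. On the target side, a non-reticulation child is never suppressed at all.
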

\begin{proof}
Fix a root split. We first consider case \textbf{T}.

(a) Recall that the target node is $x$ and the other child of $w$ is $y$.
Since $x\in D(x)$, $P_{tgt}$ is non-empty. Since $x$ and $y$ are the two
children of $w$, $y$ is not reachable from $x$, and therefore
$y\notin D(x)$. Since $y\in D(y)$, it follows that
$D(y)\setminus D(x)\neq\emptyset$. Hence, $P_{src}$ is non-empty.

(b) Assume that $(u,v)\in E(N)$ and that $u$ is a node in $P_{tgt}$.
By definition, $u$ is reachable from $x$, and hence $v$ is also
reachable from $x$. Thus, $v\in D(x)$, so $v$ is also a node in $P_{tgt}$.
Therefore, there is no directed edge from $P_{tgt}$ to $P_{src}$.

(c) Assume that $(u,v)\in C_{\rightarrow}$. Since $v$ is a node
of $P_{tgt}$, $v$ is reachable from $x$. Hence, there is an edge $e$
entering $v$ in $P_{tgt}$. Since $(u,v)$ and $e$ are two distinct edges
entering $v$ in $N$, $v$ is a reticulation node and hence $(u,v)$ is a reticulation edge of $N$.

(d) Acyclicity is inherited by both $P_{tgt}$ and $P_{src}$.

Any node suppressed in forming $A_{tgt}$ is a reticulation node of $N$
that has lost one of its two incoming edges. Let $r$ be a reticulation
node of $A_{tgt}$. Then $r$ is also a reticulation node of $N$. Its parents,
sibling, and child in $N$ belong to $P_{tgt}$. By the tree-child property,
the sibling and child of $r$ are not reticulation nodes. Hence, the
tree-child property is preserved in $A_{tgt}$. Thus, $A_{tgt}$ is tree-child.

Now let $u$ be a node of indegree $1$ and outdegree $1$ in $P_{src}$.
Then one child of $u$ in $N$ lies in $P_{tgt}$, and the corresponding edge
belongs to $C_{\rightarrow}$. By (c), that child is a reticulation
node. Since $N$ is tree-child, the other child of $u$ is not a
reticulation node. Suppressing $u$ therefore preserves the tree-child
property at the parent of $u$, and does not affect the children of any
other node. Thus, $A_{src}$ is tree-child.

Since $P_{src}$ and $P_{tgt}$ are non-empty, so are $A_{src}$ and $A_{tgt}$.
Therefore, $m\geq1$ and $p\geq1$.

Let $\ell\in L(N)$. Since $x$ and $y$ are the children of $w$,
$\ell$ is reachable from at least one of them. If $\ell$ is reachable
from $x$, then $\ell$ belongs to $A_{tgt}$. Otherwise, $\ell$ belongs to
$A_{src}$. Hence, the leaves of $N$ are partitioned between $A_{src}$ and
$A_{tgt}$, and therefore
\[
m+p=n.
\]

(e) Let $c=|C_{\rightarrow}|$. Each crossing edge enters a
reticulation node that is suppressed in forming $A_{tgt}$, and each
reticulation node suppressed in forming $A_{tgt}$ has exactly one incoming
edge in $C_{\rightarrow}$. Hence, exactly $c$ reticulation nodes of
$N$ disappear when $P_{tgt}$ is reduced to $A_{tgt}$. All other reticulation
nodes of $N$ occur in either $A_{src}$ or $A_{tgt}$. Therefore,
\[
i+j+c=k.
\]

Clearly, $c\geq0$. To prove the upper bound, let $r$ be a reticulation
node suppressed in forming $A_{tgt}$, and let $u$ be its parent in $P_{tgt}$.
The sibling and child of $r$ are not reticulation nodes, by the
tree-child property. After $r$ is suppressed, $u$ is therefore a free
node of $A_{tgt}$. Distinct suppressed reticulation nodes give distinct
free nodes of $A_{tgt}$. Hence, by Proposition~\ref{prop21},
\[
c\leq \#(\text{free nodes in }A_{tgt})=p-j-1.
\]

We now consider case \textbf{R}. The target node is $\rho$ and the
source node is $y$. Since $y$ is not reachable from $\rho$, both
$P_{tgt}$ and $P_{src}$ are non-empty. The same arguments as above show that
properties (a)--(d) also hold in this case. 
For property (e),   one cross edge has $\rho$ as its head in this case and
it is possible that $c=p-j$, as shown in Figure~\ref{fig3}.
\end{proof}

\begin{figure}[!b]
    \centering
    \includegraphics[width=0.4\textwidth]{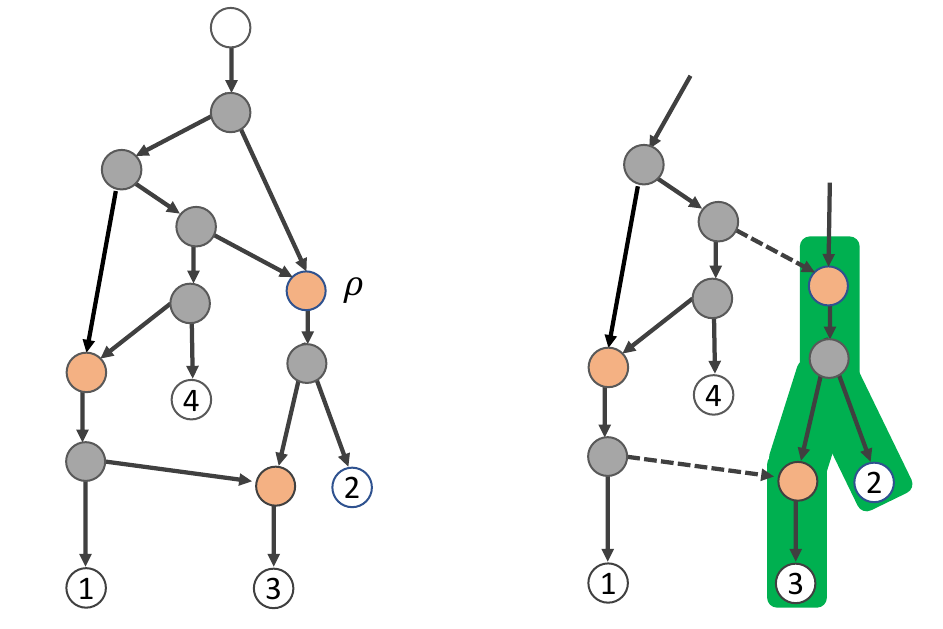}
    \caption{
The unique root split of the tree-child network shown on the left, in which one grandchild $\rho$ of the root is a reticulation node. In this root split, $\rho$ is the head of a cross edge, and both the target subnetwork (green) and the source subnetwork are non-empty, as shown on the right. The roots of the subnetworks are omitted for clarity.
    \label{fig3}
    }
\end{figure}

\begin{definition}[The kernel]\label{def:kernel}
For $m+p=n$,  $i+j+c=k$, $A_{src}\in \TC(m, i)$, and $A_{tgt}\in \TC(p, j)$,   put
\begin{eqnarray*}
  s:= \#(\text{tree edges in }  A_{src}) =2m+i-1, \qquad t:= \#(\text{free nodes in }  A_{tgt} )=p-j,
\end{eqnarray*}
 and, with $\rise{x}{c}:=x(x+1)\cdots(x+c-1)$ the rising
factorial ($\rise{x}{0}=1$),
\begin{eqnarray}
  Q_0:=\left[2^c{p-j-1\choose c}+2^{c-1}{p-j-1\choose c-1}\right],\\
  K_0:=\binom nm\,2^{\,c}\binom{p-j}{c}\,\rise{s}{c}.
\end{eqnarray}
\end{definition}

\begin{lemma}\label{lem:trace}
Keep the notation of Lemma~\ref{lem:legal} and Definition~\ref{def:kernel}. Let
$A_{src}\in\TC(m,i)$ and $A_{tgt}\in\TC(p,j)$, where $m,p\geq1$, and let
$c\leq p-j$ and $k=i+j+c$. Then
{\color{red} $Q_0\rise{s}{c}$ distinct tree-child networks} with $k$ reticulations on $m+p$ taxa are obtained
from $A_{src}$ and $A_{tgt}$ by inserting $c$ cross edges as follows.

Let
\[
\Pi=\{\text{tree edges of }A_{src}\},
\qquad
\Upsilon=\{\text{free nodes of }A_{tgt}\}.
\]
Choose $c$ distinct nodes of $\Upsilon$, and at each chosen node choose
one of its free edges. 

Process the $c$ chosen target edges in a fixed order. For each chosen
target edge $e_2=(u_2,v_2)$, choose an edge
$e_1=(u_1,v_1)\in\Pi$, subdivide $e_1$ by a new tree node $g$,
subdivide $e_2$ by a new reticulation node $h$, and add the cross edge
$(g,h)$. Then replace
\[
\Pi
\quad\text{by}\quad
(\Pi\setminus\{e_1\})\cup\{(u_1,g),(g,v_1)\}.
\]

Finally, a tree-child network is obtained by merging the roots of $A_{src}$ and $A_{tgt}$ into a new tree node $w$
and adding a new root as the parent of $w$.
\end{lemma}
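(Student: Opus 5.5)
The plan is to treat the count as a product of two independent choices, the target edges and then the source edges, and to check separately that every output is a tree-child network with the right parameters and that distinct choices give distinct networks.

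\emph{Counting the choices.} By Proposition~\ref{prop21}, $\Upsilon$ consists of the root of $A_{tgt}$ together with $p-j-1$ free tree nodes, so $t=p-j$.

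\begin{itemize}
\item The root has exactly one free edge, and each free tree node has two.
\item If the root is not among the $c$ chosen nodes, the target edges can be chosen in $2^c\binom{p-j-1}{c}$ ways.
\item If the root is chosen, they can be chosen in $2^{c-1}\binom{p-j-1}{c-1}$ ways.
\item Together these give $Q_0$.
\end{itemize}

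Next, the target edges are processed in the fixed order. When the $r$-th one is processed, $|\Pi|=s+r-1$, because each step removes one edge from $\Pi$ and adds two. So the sequence of source choices can be made in $s(s+1)\cdots(s+c-1)=\rise{s}{c}$ ways. The total number of choice sequences is therefore $Q_0\rise{s}{c}$.

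\emph{Validity.} I will check the tree-child property locally at every node whose children change.

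\begin{itemize}
\item \emph{A new reticulation $h$.} Its child $v_2$ is a non-reticulation node, since $e_2$ is a free edge. Its sibling is the other child of $u_2$. This sibling is not a reticulation in $A_{tgt}$ because $u_2$ is free. It does not become one later, because the chosen nodes of $\Upsilon$ are distinct, so the other free edge at $u_2$ is never subdivided.
\item \emph{A new tree node $g$.} Its children are $h$ and either $v_1$ or a later-inserted tree node $g'$. Since every edge of $\Pi$ is a tree edge, $v_1$ is not a reticulation. So $g$ has a non-reticulation child.
\item \emph{The node $u_1$.} It only trades the non-reticulation child $v_1$ for the tree node $g$, so it is unaffected.
\item \emph{The merged node $w$.} Its child on the source side is a tree node or leaf of $A_{src}$, or some $g$. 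So $w$ has a non-reticulation child even when the root edge of $A_{tgt}$ was subdivided, which is case \textbf{R}.
\end{itemize}

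The graph stays acyclic, because all cross edges go from the source side to the target side. Degrees are as required, the taxa number $m+p$, and there are $i+j+c=k$ reticulations.

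\emph{Distinctness.} This is the step I expect to be the main obstacle, because of the sequential bookkeeping. I will show that the choice data can be recovered from the output network $N$.

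\begin{enumerate}
\item Among the two children of $w$, the target node $x$ is the one with $D(x)\cap L(N)=L(A_{tgt})$. This identifies $x$ even in case \textbf{T}, because the leaf sets of $A_{src}$ and $A_{tgt}$ are fixed and disjoint.
\item The cross edges are exactly the edges entering $D(x)$ from outside. Their heads, the nodes $h$, determine which free edges of $A_{tgt}$ were subdivided, and hence the set of target choices.
\item Their tails are the nodes $g$. Each $g$ lies on a path of $P_{src}$ that replaces an original tree edge of $A_{src}$.
\item Several $g$'s can lie on the same original edge. Their order along that path, together with the fixed processing order of the $e_2$'s, determines which current edge of $\Pi$ was chosen at each step. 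I will prove this by induction on $r$: the $r$-th choice is the edge of the current $\Pi$ that contains the $g$ attached to the $r$-th $h$, once later insertions are contracted.
\end{enumerate}

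Since the leaves are labelled and all these data are read off $N$, the map from choice sequences to networks is injective. This gives exactly $Q_0\rise{s}{c}$ distinct networks.
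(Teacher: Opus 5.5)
Your proposal is correct and follows essentially the same route as the paper. You split the target-edge choices by whether the root of $A_{tgt}$ is chosen to get $Q_0$, use the growth of $\Pi$ to get $\rise{s}{c}$, check the tree-child property locally at $h$, $u_2$, $g$, $u_1$ and $w$, and prove distinctness by reading the choices back off the output network. Your recovery step is more explicit than the paper's one-line injectivity remark: you identify the target child of $w$ by its leaf set and use the fixed processing order to untangle several tails on one original source edge. One small correction: the cross edges are the edges entering $D(x)$ from $D(y)\setminus D(x)$, not all edges entering $D(x)$ from outside, since the edge $(w,x)$ also enters $D(x)$ from outside.
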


\begin{proof}
If each chosen node of $\Upsilon$ is a free tree node, there are two free edges to choose. If the root is chosen, there is only one outgoing tree edge to choose. Thus, there are   
\[
2^c\binom{p-j-1}{c}+2^{c-1}{p-j-1\choose c-1}=Q_0
\]
choices of target edges.
For each of these 
choices of target edges, we have
\[
|\Pi|=2m+i-1
\]
at the beginning of the process, and each insertion increases $|\Pi|$ by one.
Hence, the numbers of choices for the successive source edges are
\[
2m+i-1,\ 2m+i,\ldots,2m+i+c-2,
\]
whose product is $s^{\overline c}$.
Thus, the edge-insertion process has
$Q_0\rise{s}{c}$
possible outcomes. Since every inserted cross edge is directed from the
source part to the target part, each produced network is acyclic.

We next show that each insertion preserves the tree-child property.
Suppose that $e_1=(u_1,v_1)$ is the chosen tree edge in the source part
and that $e_2=(u_2,v_2)$ is the chosen free edge in the target part.
Subdivide $e_1$ by the new tree node $g$, subdivide $e_2$ by the new
reticulation node $h$, and add the cross edge $(g,h)$.

In the source part, the new tree node $g$ has $v_1$ as one of its
children. Since $e_1$ is a tree edge, $v_1$ is a tree node or a leaf.
Thus, $g$ satisfies the tree-child property. 
$u_1$ still satisfies the tree-child property, as $g$ is a tree node. The insertion does not
change the children of any other node in the source part.

In the target part, $h$ has $v_2$ as its unique child. Since $e_2$ is
a free edge, $v_2$ is not a reticulation node. Hence, $h$ satisfies the
tree-child property. It remains only to check the parent $u_2$ of $h$.

If $u_2$ is a free tree node, let $v'_2$ be its other child. Since
$u_2$ is free, both $v_2$ and $v'_2$ are non-reticulation nodes.
After the insertion, the children of $u_2$ are $h$ and $v'_2$.
Therefore, $u_2$ still has a non-reticulation child and satisfies the
tree-child property.

If $u_2$ is the root of $A_{tgt}$, then after the insertion its unique
child is $h$. At the end of the construction, the roots of $A_{src}$ and
$A_{tgt}$ are merged into the new tree node $w$. The node $w$ has, on the
source side, a non-reticulation child. Hence, $w$ satisfies the
tree-child property in the resulting network.

It follows that every network produced by the construction is a
tree-child network.

It remains to show that the $Q_0\rise{s}{c}$ possible choices give distinct
networks. 
Indeed, the tail of each
inserted cross edge has distinct sets of children for the distinct source edge on which it
was inserted. 
Likewise, its head determines the chosen free edge in
the target part. Consequently, the choices made in the insertion
process can be recovered uniquely from the resulting network, so
different choices produce distinct networks.

Therefore, inserting $c$ cross edges from $A_{src}$ to $A_{tgt}$ produces exactly $Q_0\rise{s}{c}$
distinct tree-child networks with $k=i+j+c$ reticulations.
\end{proof}

\begin{proposition} \label{prop:split}
For $n\ge2$ and $0\le k<n$,
\begin{equation}\label{eq:formB}
  2\,a_{n, k}=\sum_{\substack{m+p=n\\ i+j+c=k}} K_0\,a_{m, i}\,a_{p, j}.
\end{equation}
\end{proposition}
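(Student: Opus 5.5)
The plan is to prove \eqref{eq:formB} by double counting pairs $(N,\sigma)$, where $N\in\TC(n,k)$ and $\sigma$ is a root split of $N$. Each such pair gets a weight: weight $1$ if $N$ is in case \textbf{T}, and weight $2$ if $N$ is in case \textbf{R}. Since a case-\textbf{T} network has two root splits and a case-\textbf{R} network has one, the total weight is exactly $2a_{n,k}$. It then remains to show that the right-hand side computes the same weighted total, organized by the data $(m,p,i,j,c)$ and by $(A_{src},A_{tgt})$.

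\textbf{Step 1: rewrite the kernel.} Since $\binom{p-j}{c}=\binom{p-j-1}{c}+\binom{p-j-1}{c-1}$, we have
\[
2^c\binom{p-j}{c}=Q_0+2^{c-1}\binom{p-j-1}{c-1},
\]
so $K_0=\binom nm\bigl(Q_0+2^{c-1}\binom{p-j-1}{c-1}\bigr)\rise{s}{c}$. In Lemma~\ref{lem:trace}, the term $2^{c-1}\binom{p-j-1}{c-1}$ inside $Q_0$ counts the choices of target edges in which the root of $A_{tgt}$ is among the $c$ chosen nodes of $\Upsilon$. In that situation the child of $w$ on the target side becomes the new reticulation $h$, so the output network is in case \textbf{R}. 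When the root is not chosen, both children of $w$ are non-reticulations and the output is in case \textbf{T}. Hence $K_0 a_{m,i}a_{p,j}$ equals $\binom nm$ times the sum over $(A_{src},A_{tgt})$ of (the case-\textbf{T} outputs) $+\,2\cdot$(the case-\textbf{R} outputs) of the construction in Lemma~\ref{lem:trace}. The factor $\binom nm$ accounts for choosing which $m$ of the $n$ taxa label the source part; $a_{m,i}$ and $a_{p,j}$ then count the labelled networks on the two chosen label sets.

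\textbf{Step 2: a bijection.} I would establish a bijection between
\begin{itemize}
\item tuples consisting of a label partition, $A_{src}$, $A_{tgt}$, and a choice sequence as in Lemma~\ref{lem:trace}, and
\item pairs $(N,\sigma)$, where the split $\sigma$ takes as target the subnetwork coming from $A_{tgt}$.
\end{itemize}

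\emph{Forward map.} Lemma~\ref{lem:trace} gives a tree-child network. Its root child $w$ has the (possibly subdivided) roots' children as its two children. Taking $x$ to be the target-side child gives a legal root split whose $P_{tgt}$, $P_{src}$, $C_{\rightarrow}$ reduce back to $A_{tgt}$, $A_{src}$ and the inserted edges. Injectivity is the distinctness part of Lemma~\ref{lem:trace}.

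\emph{Inverse map.} Given $(N,\sigma)$, Lemma~\ref{lem:legal}(d),(e) supplies $A_{src}\in\TC(m,i)$, $A_{tgt}\in\TC(p,j)$ and $c$ cross edges with $i+j+c=k$. The head of each cross edge is a suppressed reticulation sitting on a free edge of $A_{tgt}$, and distinct heads lie below distinct free nodes (argument of Lemma~\ref{lem:legal}(e)). The tails are the suppressed indegree-$1$, outdegree-$1$ nodes of $P_{src}$, which lie on tree edges of $A_{src}$, possibly several on one edge. Processing the heads in the fixed order, one reads off, for each tail, the edge of the current $\Pi$ on which it sits. This recovers a valid choice sequence, and re-running the construction gives back $N$.

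\textbf{Step 3: matching the weights.} In case \textbf{T} the target child $x$ is not a reticulation, so the root of $A_{tgt}$ is not chosen. In case \textbf{R} the target is $\rho$, so the root of $A_{tgt}$ is chosen. Combining this with Step 1, each case-\textbf{T} pair is counted once and each case-\textbf{R} pair twice, which gives \eqref{eq:formB}.

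The main obstacle is the surjectivity and uniqueness of reading off the ordered source-edge choices when several tails lie on one edge of $A_{src}$. The rising factorial $\rise{s}{c}$ implicitly asserts that each arrangement of tails along the edges, together with their assignment to heads, arises from exactly one sequential insertion history. I would check this by a direct argument: given the fixed order of heads, insert the tails one at a time. The edge of the current $\Pi$ containing the next tail is uniquely determined by its position relative to previously inserted tails, and every placement of the tails along the edges is reachable in this way.
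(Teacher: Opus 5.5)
Your proposal is correct and follows essentially the same route as the paper. Both weight the root splits so that a case \textbf{T} network (two splits) and a case \textbf{R} network (one split, counted twice) each contribute $2$. Both then split the target-edge choices of Lemma~\ref{lem:trace} according to whether the root of $A_{tgt}$ is chosen, and recombine $2^c\binom{p-j-1}{c}+2\cdot 2^{c-1}\binom{p-j-1}{c-1}=2^c\binom{p-j}{c}$ by Pascal's rule to obtain $K_0$. Your explicit inverse map, and your check that each placement of the labelled tails along the source edges comes from exactly one sequential insertion history (which is what makes the factor $\rise{s}{c}$ correct), spell out details the paper leaves implicit in the phrase ``reversing the root split gives exactly the edge-insertion construction.''
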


\begin{proof}
By Lemma~3.2, every root split of $N$
determines a source network $A_{src}$, a target network $A_{tgt}$, and $c$
cross edges, and reversing the root split gives exactly the
edge-insertion construction of Lemma~3.4.

If $N$ is in case \textbf{T}, it has two root splits, according to
which of the two children of the top tree node is chosen as the target
node. Thus, $N$ is produced twice by insertion of $c$ edges.

If $N$ is in case \textbf{R}, it has only one root split. In this case, the root of $A_{tgt}$ is one of the $c$ chosen free nodes in the construction. 

Fix $m,p,i,j,c$ such that
$
m+p=n,\; i+j+c=k.
$
There are ${n\choose m}a_{m,i}a_{p,j}$
choices of a source network $A_{src}\in\TC(m,i)$ and a target network
$A_{tgt}\in\TC(p,j)$ on complementary subsets of the $n$ taxa.
For each such pair, by Lemma~3.4, insertion of $c$ cross edges gives 
$2^c{p-j-1\choose c}(2m+i-1)^{\overline c}$
distinct tree-child networks that fall in case {\bf T};
and $2^{c-1}{p-j-1\choose c-1}(2m+i-1)^{\overline c}$
distinct tree-child networks that fall in case {\bf R}.
Hence, summing over all admissible $m,p,i,j,c$, and counting each network produced in case R with multiplicity $2$, we obtain
\[
\sum_{\substack{m+p=n\\ i+j+c=k}}
{n\choose m}
2^c\left[{p-j-1\choose c}+{p-j-1\choose c-1}\right](2m+i-1)^{\overline c}
a_{m,i}a_{p,j}
=
\sum_{\substack{m+p=n\\ i+j+c=k}}
K_0a_{m,i}a_{p,j}.
\]

Thus, networks in case T are counted once for each of their two root splits, while networks in case R are counted twice for their unique root split. Therefore, every network in $\mathrm{TC}(n,k)$ is counted exactly twice, proving identity (\ref{eq:formB}).
\end{proof}

\section{Side and Free Leaves in Tree-Child Networks}

Let $N$ be a tree-child network and $\ell\in L(N)$. We call $\ell$  a \emph{child leaf} if the parent of $\ell$ is a reticulation node, a \emph{side leaf} if the sibling of $\ell$ is a reticulation, and \emph{free leaf} otherwise.
The three kinds are disjoint and exhaustive for leaves in a tree-child network. Define
\begin{eqnarray}
  H(n,k):=\sum_{N\in\TC(n,k)}\#\{\text{child leaves}\} +\sum_{N\in\TC(n,k)}\#\{\text{side leaves}\}, \\
  \Gamma(n, k)=\sum_{N\in\TC(n,k)}\#\{\text{free leaves}\}.
\end{eqnarray}
In other words,  $H(n,k)$ counts the pairs $(N,\ell)$ with $\ell$ a child leaf or a side leaf.  $\Gamma(n,k)$
counts the number of pairs $(N,\ell)$ with $\ell$ a free leaf. Thus, 
\begin{equation}\label{eqn44}
  H(n,k)+\Gamma(n,k)=n\,a_{n,k}\qquad (0\leq k < n).
\end{equation}

\begin{lemma}\label{lem12}
For $n\ge2$ and $0\le k<n$, 
\begin{equation}
\label{eqn55}
   \Gamma(n,k)=n(2n+k-3)a_{n-1,k}.
\end{equation}
\end{lemma}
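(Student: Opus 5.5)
We prove the identity with a bijection. On one side are the pairs $(N,\ell)$ with $N\in\TC(n,k)$ and $\ell$ a free leaf of $N$. On the other side are the triples $(\ell, N', e)$, where $\ell$ is one of the $n$ taxa, $N'\in\TC(n-1,k)$ is a network on the remaining $n-1$ taxa, and $e$ is a tree edge of $N'$ other than the edge entering a reticulation's sibling-type configuration to be excluded below. The forward map removes a free leaf. The backward map attaches a new leaf to a tree edge. By Proposition~\ref{prop21}, $N'$ has $2(n-1)+k-1=2n+k-3$ tree edges, so if every tree edge of $N'$ is admissible the count is exactly $n(2n+k-3)a_{n-1,k}$. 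The argument therefore reduces to showing that every tree edge is admissible and that the two maps are mutually inverse.

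\textbf{Forward map.} Let $\ell$ be a free leaf of $N$ with parent $u$. Since $\ell$ is not a child leaf, $u$ is a tree node, and since $\ell$ is not a side leaf, the sibling $v$ of $\ell$ is a tree node or a leaf. Let $u'$ be the parent of $u$ (possibly the root). Delete $\ell$ and $u$ and add the edge $(u',v)$. The result $N'$ is acyclic and has $n-1$ leaves and $k$ reticulations. We must check the following.
\begin{itemize}
\item There are no parallel edges: if $(u',v)$ already existed, $v$ would have two parents and so would be a reticulation, which is impossible.
\item The tree-child property holds: at $u'$ the child $u$ (a tree node) is replaced by $v$, which is again a non-reticulation. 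If $u'$ is the root, it keeps outdegree $1$.
\item The new edge $(u',v)$ is a tree edge, because $v$ is a non-reticulation.
\end{itemize}
The edge $(u',v)$ records where $\ell$ was removed.

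\textbf{Backward map.} Given a tree edge $e=(a,b)$ of $N'$, subdivide it by a new tree node $u$ and attach the leaf $\ell$ as a child of $u$. We must check the following.
\begin{itemize}
\item The tree-child property holds: $u$ has the leaf child $\ell$; $a$ now has the tree child $u$ in place of $b$; all other nodes are unchanged.
\item The number of reticulations is still $k$.
\item $\ell$ is a free leaf: its parent $u$ is a tree node, and its sibling $b$ is not a reticulation because $e$ is a tree edge.
\end{itemize}
Hence every tree edge is admissible, and this map lands in the set of pairs $(N,\ell)$ with $\ell$ free.

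Finally, the two constructions are mutually inverse. Removing $\ell$ recovers $N'$ together with the edge $(a,b)$, and re-inserting $\ell$ on $(u',v)$ recovers $N$. Labels are handled by recording which taxon $\ell$ is, giving the factor $n$.

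The main obstacle is the careful verification that the two maps are well defined, in particular ruling out parallel edges in the forward direction and checking the boundary case where $u'$ is the root (for $n\ge2$, $N'$ with one leaf has exactly one tree edge, which is consistent with $2n+k-3=1$ when $n=2,k=0$). Apart from this, the count follows directly from Proposition~\ref{prop21}.
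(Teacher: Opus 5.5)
Your proposal is correct and is essentially the paper's argument: you delete a free leaf and suppress its parent, and conversely you subdivide any of the $2n+k-3$ tree edges of a network in $\TC(n-1,k)$ and attach the new leaf there, adding the checks (no parallel edges, the tree-child condition at $u'$, the new edge being a tree edge) that the paper leaves implicit. The only blemish is the stray clause in your first paragraph about an edge ``to be excluded below''; since you go on to show that every tree edge is admissible, that clause should simply be deleted.
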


\begin{proof}
 Let $N\in \TC(n,k)$ and let $\ell$ be a free leaf of $N$ with parent $w$. Since the other child of $w$ is a tree node or a leaf, deleting $\ell$ and suppressing $w$ yields a tree-child network with $k$ reticulations on the remaining $n-1$ leaves.

Conversely, choose the omitted label ($n$ ways), a
network $M\in \TC(n-1,k)$ on the other $n-1$ labels. By Proposition~\ref{prop21},  there are $2n+k-3$ tree edges of $M$ to insert the new leaf (subdivide it and attach the new leaf).
The new leaf is free by definition. The two constructions are mutually inverse.
\end{proof}


\begin{theorem}\label{thm:52}
The Pons--Batle identity is equivalent to the following identity:
\begin{equation}\label{eqn66}
  H(n,k)\;=\;k\,a_{n, k}\;+\;(n-k)(n-k+1)\,a_{n,k-1}
\end{equation}
for every $n\ge1$ and $0\le k<n$.
\end{theorem}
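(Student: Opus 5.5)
The equivalence is pure algebra, combining (\ref{eqn44}) with Lemma~\ref{lem12}. I would first substitute (\ref{eqn55}) into (\ref{eqn44}). For $n\ge2$ and $0\le k<n$ this gives
\[
H(n,k)=n\,a_{n,k}-n(2n+k-3)\,a_{n-1,k}.
\]
Next I would rewrite the Pons--Batle identity (\ref{conjecture}). Since $k<n$, we may multiply through by $n-k>0$, and it becomes
\[
n(2n+k-3)\,a_{n-1,k}=(n-k)\,a_{n,k}-(n-k)(n-k+1)\,a_{n,k-1}.
\]
Substituting this expression for $n(2n+k-3)a_{n-1,k}$ into the formula for $H(n,k)$ yields
\[
H(n,k)=n\,a_{n,k}-(n-k)a_{n,k}+(n-k)(n-k+1)a_{n,k-1}=k\,a_{n,k}+(n-k)(n-k+1)a_{n,k-1},
\]
which is (\ref{eqn66}).

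Each step is an equality with a nonzero multiplier $n-k$, so the argument reverses. If (\ref{eqn66}) holds, then equating it with $n a_{n,k}-n(2n+k-3)a_{n-1,k}$ and dividing by $n-k$ recovers (\ref{conjecture}). This gives the equivalence for every $n\ge 2$, $0\le k<n$.

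The only point needing care is the boundary cases, and I expect this to be the main (minor) obstacle.
\begin{itemize}
\item For $n=1$ we have $k=0$. The unique network has one leaf whose parent is the root, so it is free and $H(1,0)=0$. The right-hand side of (\ref{eqn66}) is $0\cdot a_{1,0}+1\cdot 2\cdot a_{1,-1}=0$ by the convention $a_{n,k}=0$ for $k<0$. So (\ref{eqn66}) holds trivially at $n=1$. The Pons--Batle identity is only asserted for $n\ge2$, or holds with $a_{0,0}=0$, so nothing more is required there.
\item For $k=0$ we use $a_{n,-1}=0$. The rearrangement still goes through, since the convention on $a_{n,k}$ is applied consistently on both sides.
\item In Lemma~\ref{lem12}, when $k=n-1$ we have $a_{n-1,k}=0$. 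This is consistent, because then $\Gamma(n,n-1)$ must vanish. That agrees with Proposition~\ref{prop21}: a network with $n-1$ reticulations has no free tree nodes, and the root's child is not free.
\end{itemize}
With these checks, the stated range $n\ge1$, $0\le k<n$ is covered.
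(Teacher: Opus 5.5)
Your argument is correct and is essentially the paper's proof: the same algebra combining (\ref{eqn44}) with Lemma~\ref{lem12} and multiplying or dividing by $n-k>0$, and you additionally handle $k=0$ via $a_{n,-1}=0$, which the paper's proof skips. One small slip: at $n=1$ the Pons--Batle identity does \emph{not} hold with $a_{0,0}=0$, since it would force $a_{1,0}=-a_{0,0}=0$, so the correct resolution is the one you give first, namely that the identity is only asserted (and the equivalence only meaningful) for $n\ge2$.
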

\begin{proof}
Assume that (\ref{eqn66}) holds. Adding identities (\ref{eqn55}) and (\ref{eqn66}) and using (\ref{eqn44}), we obtain, for $n\ge2$ and $1\le k<n$,
\[
n(2n+k-3)a_{n-1,k}
+k\,a_{n,k}
+(n-k)(n-k+1)a_{n,k-1}
=n\,a_{n,k}.
\]
Rearranging this identity gives
\[
a_{n,k}
=(n-k+1)a_{n,k-1}
+\frac{n(2n+k-3)}{n-k}a_{n-1,k},
\]
which is precisely the Pons--Batle identity.

Conversely, assuming the Pons--Batle identity, reversing the above argument yields (\ref{eqn66}).
\end{proof}

In the remainder of the paper, we prove identity (\ref{eqn66}) by deriving a recurrence relation using the root-split operation.

\section{A Recurrence Formula for Counting Non-Free Leaves}\label{sec:kernels}

\begin{definition}\label{def:fourkernels}
Retain $s=2m+i-1$ and $t=p-j$, and set
\[
  K_1:=\binom nm 2^{\,c}\binom tc\rise{(s-1)}{c},\qquad
  K_2:=\binom nm 2^{\,c}\binom{t-1}c\rise sc,
\]
\[
  K_3:=\binom nm 2^{\,c}\binom tc\,c\,\rise s{c-1},\qquad
  K_4:=\binom nm 2^{\,c}\binom{t-1}{c-1}\rise sc,
\]
with $K_3=K_4=0$ for $c=0$.
\end{definition}


\begin{lemma}\label{lem:kernelid}
$K_0=K_1+K_3$, $\ K_0=K_2+K_4$, and $c\,K_0=t\,K_4$.
\end{lemma}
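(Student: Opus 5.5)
The three identities are purely algebraic. The plan is to factor out the common prefactor $\binom nm 2^{c}$ and reduce each identity to a standard identity for rising factorials or for binomial coefficients. Throughout I use $t=p-j$, $s=2m+i-1$, the convention $\binom{a}{-1}=0$, and the fact that $K_3=K_4=0$ when $c=0$. The case $c=0$ can be checked directly: all $K$'s with index $0$, $1$, $2$ equal $\binom nm$, $K_3=K_4=0$, and both sides of $cK_0=tK_4$ vanish. So I assume $c\ge1$ below.

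For $K_0=K_1+K_3$, both sides carry the factor $\binom nm 2^c\binom tc$. It therefore suffices to prove
\[
\rise{s}{c}=\rise{(s-1)}{c}+c\,\rise{s}{c-1}.
\]
Write $\rise{(s-1)}{c}=(s-1)\rise{s}{c-1}$ and $\rise{s}{c}=\rise{s}{c-1}(s+c-1)$. The difference of the two is then $\rise{s}{c-1}\bigl[(s+c-1)-(s-1)\bigr]=c\,\rise{s}{c-1}$, which is exactly what is needed.

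For $K_0=K_2+K_4$, both sides carry $\binom nm 2^c\rise sc$, so the identity reduces to Pascal's rule $\binom tc=\binom{t-1}{c}+\binom{t-1}{c-1}$. Pascal's rule holds for all integers $t\ge 1$, and $t=p-j\ge1$ by Proposition~\ref{prop21} applied to $A_{tgt}$.

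For $cK_0=tK_4$, again cancel $\binom nm 2^c\rise sc$. What remains is the absorption identity $c\binom tc=t\binom{t-1}{c-1}$, valid for $t\ge1$, $c\ge1$; when $c>t$ both sides are zero.

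The only real obstacle is bookkeeping at the boundaries. One has to confirm that the conventions ($\rise{x}{0}=1$, $K_3=K_4=0$ at $c=0$, binomials vanishing outside their range) make each reduced identity hold in the degenerate cases as well. Once that is checked, the three identities follow immediately.
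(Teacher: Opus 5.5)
Your proposal is correct and follows the paper's proof: factor out the common prefactor, then reduce the three identities to $\rise sc-\rise{(s-1)}c=c\,\rise s{c-1}$, to Pascal's rule, and to the absorption identity $c\binom tc=t\binom{t-1}{c-1}$. Your separate check of $c=0$ and your observation that $t=p-j\ge1$ are small extra care points that the paper leaves implicit.
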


\begin{proof}
The first equation is derived from that $\rise sc-\rise{(s-1)}c=\bigl[(s+c-1)-(s-1)\bigr]\rise s{c-1}=c\,\rise s{c-1}$.

The second is derived from that $\binom tc-\binom{t-1}c=\binom{t-1}{c-1}$ (Pascal's identity).

The third is derived from that  $c\binom tc=t\binom{t-1}{c-1}$.
\end{proof}



\begin{proposition} \label{prop:forcing}
For $n\ge2$ and $0\le k<n$,
\begin{equation}\label{eq:forcing}
  2\,H(n,k)=\sum_{\substack{m+p=n\\ i+j+c=k}}
  \Bigl[K_1\,H(m,i)\,a_{p, j}+K_2\,a_{m, i}\,H(p,j)
  +\bigl(m K_3+p K_4\bigr)a_{m, i}\,a_{p, j}\Bigr].
\end{equation}
\end{proposition}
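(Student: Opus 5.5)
We prove (\ref{eq:forcing}) by refining the double counting behind Proposition~\ref{prop:split}. We count pairs $(N,\ell)$ with $N\in\TC(n,k)$ and $\ell$ a non-free leaf (child or side leaf). Each such pair is counted once for every root split of $N$, and once more in case \textbf{R}, so the total weight is $2H(n,k)$. Under the bijection between root splits and the edge-insertion construction of Lemma~\ref{lem:trace}, the leaf $\ell$ lies either in $A_{src}$ or in $A_{tgt}$. For fixed $(A_{src},A_{tgt},c)$ we therefore ask, for each leaf of $A_{src}$ or $A_{tgt}$, in how many of the $Q_0\rise sc$ insertion outcomes (weighted by $2$ for case \textbf{R} outcomes, which yields the factor $\binom tc$ in place of $Q_0$) that leaf becomes non-free in $N$.

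\emph{Leaves of $A_{src}$.} Inserting cross edges never changes a leaf's parent into a reticulation. A side leaf of $A_{src}$ keeps its reticulation sibling, since the subdivisions happen on tree edges, and a child leaf stays a child leaf. So every non-free leaf of $A_{src}$ remains non-free except when the edge entering it is subdivided. Precisely, a leaf $\ell$ of $A_{src}$ is non-free in $N$ iff either:
\begin{itemize}
\item it was non-free in $A_{src}$ and the current pendant edge into $\ell$ is never chosen, or
\item at some step a cross-edge tail $g$ is placed on the current edge into $\ell$, making its sibling a reticulation and hence $\ell$ a side leaf.
\end{itemize}
Counting by complement, outcomes in which the current pendant edge of a fixed leaf is never chosen number $(s-1)(s)\cdots(s+c-2)=\rise{(s-1)}c$, because at each step one of the available edges is forbidden. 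Outcomes in which it is chosen at least once number $\rise sc-\rise{(s-1)}c=c\,\rise s{c-1}$. Hence summing over leaves gives $K_1H(m,i)a_{p,j}$ from the originally non-free source leaves and $mK_3a_{m,i}a_{p,j}$ from all $m$ source leaves. The second contribution needs care: a previously non-free leaf hit by an insertion must be counted once, not twice. I will check that this is consistent by rewriting the count as ``non-free and never hit'' plus ``hit''. If the subdivided pendant edge of a side leaf or child leaf is hit, the leaf becomes a side leaf regardless, so the split is clean.

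\emph{Leaves of $A_{tgt}$.} A target leaf $\ell$ changes status only if its parent $u$ is a free node of $A_{tgt}$ that is chosen and the free edge into $\ell$ is subdivided. In that case $\ell$ becomes a child leaf. Otherwise its status is unchanged, except that a free leaf becomes a side leaf when the other free edge at its parent is subdivided.
\begin{itemize}
\item A non-free leaf of $A_{tgt}$ has a non-free parent, so it is unaffected. It contributes whenever its parent's node is not among the $c$ chosen nodes, or always.
\item A free leaf $\ell$ becomes non-free exactly when its parent is chosen.
\end{itemize}
Using the weighted count $2^c\binom tc\rise sc$ (with the root's single edge doubled), the number of weighted outcomes avoiding a given free node is $2^c\binom{t-1}c\rise sc$, giving $K_2$. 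The number containing it is $2^c\binom{t-1}{c-1}\rise sc$, giving $K_4$.

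The main obstacle is making these two target counts add up to exactly $K_2H(p,j)+pK_4a_{m,i}a_{p,j}$. This requires an identity saying each of the $p$ target leaves contributes $K_4$ via ``its node chosen'' and $K_2$ times its non-free indicator via ``not chosen''. That in turn requires associating to \emph{every} target leaf a free node, including child and side leaves, presumably the nearest free ancestor. If necessary, I will fall back on Lemma~\ref{lem:kernelid}: the relation $K_0=K_2+K_4$ shows non-free leaves contribute $K_0$ in total. The free leaves would then need to total $(p-H)K_4$, which would follow from each free leaf being non-free exactly when its parent is chosen. The subtle point is the root of $A_{tgt}$ and the doubling of case \textbf{R} outcomes; handling them correctly will occupy most of the verification. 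Summing over $m,p,i,j,c$ and all pairs then yields (\ref{eq:forcing}).
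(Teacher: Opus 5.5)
Your proposal is correct and is essentially the paper's proof: you refine the root-split double count by determining, for each leaf of $A_{src}$ or $A_{tgt}$, in how many case-\textbf{R}-doubled insertion outcomes (weighted total $2^c\binom tc\rise sc$) it is non-free. The one cosmetic difference is on the source side, where your split into ``non-free and never hit'' ($\rise{(s-1)}c$) and ``hit'' ($c\,\rise s{c-1}$) yields $K_1$ and $mK_3$ directly, whereas the paper counts non-free source leaves with $K_0$ and free ones with $K_3$ and then uses $K_0-K_3=K_1$. On the target side, your fallback is exactly the paper's argument and already completes the proof: non-free leaves are untouched and contribute $K_0=K_2+K_4$, while free leaves become non-free exactly when their parent is chosen and contribute $K_4$. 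So the ``nearest free ancestor'' association and the ``main obstacle'' you worry about are unnecessary.
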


\begin{proof}

Let $A_{src}\in \TC(m, i)$ and $A_{tgt}\in \TC(p, j)$, where 
$m\geq 1$ and $p\geq 1$ such that $m+p=n$. Consider a tree-child network 
$N\in \TC(n, k)$  obtained by 
inserting $c$ edges from $A_{src}$ to $A_{tgt}$, where
 $c=k-i-j \leq p-j$. 
Consider a leaf  $\ell\in L(N)$. It lies in $A_{src}$ or in
$A_{tgt}$, and in either subnetwork it is  free or non-free. We treat the four cases and show in each
when $\ell$ is non-free in $N$.

 (a) \emph{\bf $\ell$ lies in $A_{src}$ and is non-free there.} Then $\ell$ is non-free in $N$. Indeed edge insertion alters $A_{src}$ only by subdividing its tree edges with new tree
nodes. If $\ell$ was a child leaf of $A_{src}$, its parent was a reticulation $\rho'$. If the 
edge $\rho'\to\ell$ is subdivided by the tail of a cross edge,   $\ell$
becomes a side leaf; otherwise it stays a child leaf. 
If $\ell$ was a side leaf of $A_{src}$,
its sibling was a reticulation $\rho'$; the reticulation edge $\parr(\ell)\to\rho'$ is not subdivided, so the sibling can only change if the edge $\parr(\ell)\to \ell$ is
subdivided, and then $\ell$ becomes a side leaf again. 


(b) \emph{\bf $\ell$ lies in $A_{src}$ and is free there.} Its parent and sibling in $A_{src}$ are
non-reticulation nodes. No reticulation node is created on the source side, so $\ell$ can only become
a side leaf, and this happens exactly when the pendant edge  $\parr(\ell)\to \ell$ is subdivided: the lowest branch vertex inserted
there becomes the parent of $\ell$ and its other child is the head of a cross edge, and hence a reticulation node.
Subdividing the edge into $\parr(\ell)$, or the edge from $\parr(\ell)$ to $\sib(\ell)$,
leaves $\ell$ free, the inserted vertex being a non-reticulation. 


(c) \emph{\bf $\ell$ lies in $A_{tgt}$ and is non-free there.} Then,  $\ell$ is non-free in $N$. The target side
is altered only at chosen free nodes, by subdividing one of their outgoing edges. If
$\ell$ was a child leaf its parent is a reticulation, which is not a free node, so the
edge into $\ell$ is untouched. If $\ell$ was a side leaf its parent has a reticulation child
and so is not free, hence is not chosen, and again the edge into $\ell$ and the sibling of
$\ell$ are untouched. 


(d) \emph{\bf $\ell$ lies in $A_{tgt}$ and is free there.} Then,  its parent $v$ has two
non-reticulation children and is a free node.
If $v$ is not
chosen, the children of $v$ are unchanged and $\ell$ stays free. If $v$ is chosen, then
either the edge $v\to\ell$ is subdivided by a new reticulation node, making $\ell$ a child leaf,
or the other child edge of $v$ is subdivided, making the sibling of $\ell$ a reticulation node
and $\ell$ a side leaf. \emph{Both} binary choices make $\ell$ non-free.

 Let $A_{\rm src}\boxtimes A_{\rm tgt}$ denote the multiset of tree-child networks obtained by inserting $c(=k-i-j)$ cross edges from $A_{\rm src}$ to $A_{\rm tgt}$, where a network in case $\mathbf R$ is counted with multiplicity two.
Let $H_{X}$ denote the number of non-free leaves in a tree-child network $X$. The number of free leaves in $X$ is
$\vert L(X)\vert -H_X$.
Taken together, the above four cases imply that
\begin{eqnarray*}
   && \sum_{X\in A_{src}\boxtimes A_{tgt}}H_X \\
   &=& 2^c{t\choose c}\rise {s}{c} H_{A_{src}}
    +(m-H_{A_{src}})\,2^c\,{t\choose c}\,c\rise s{c-1}
    +2^c{t\choose c}\rise {s}{c} H_{A_{tgt}}
    + (p-H_{A_{tgt}})\,2^c\,{t-1\choose c-1}\rise s{c}.
\end{eqnarray*}
Summing over all possible $A_{src}$ and $A_{tgt}$, 
\begin{align*}
  2H(n,k)=\sum\Bigl[&K_0\,H(m,i)\,a_{p, j}+K_0\,a_{m, i}\,H(p,j)\\
  &+K_3\bigl(m\,a_{m, i}-H(m,i)\bigr)a_{p, j}
   +K_4\,a_{m, i}\bigl(p\,a_{p, j}-H(p,j)\bigr)\Bigr],
\end{align*}
and $K_0-K_3=K_1$, $K_0-K_4=K_2$ turn this into \eqref{eq:forcing}.
\end{proof}


\section{Proof of the Pons--Batle Identity}\label{sec:candidate}

\begin{definition}\label{def:cand}
Define
\[
  D(n,k):=(n-k)(n-k+1)\,a_{n,k-1},\qquad U(n,k):=(n-k)\,a_{n,k},
\]
\[
  \Phi(n,k):=k\,a_{n,k}+(n-k)(n-k+1)\,a_{n,k-1}.
\]
\end{definition}


\begin{lemma} \label{lem:twomark}
For $n\ge2$ and $k\ge1$,
\begin{eqnarray}
 2\,D(n,k)=\sum_{\substack{m+p=n\\ i+j+c=k}}
  \Bigl[K_1\,D(m,i)\,a_{p, j}+K_2\,a_{m, i}\,D(p,j)+K_3\,U(m,i)\,a_{p, j}\Bigr]. \label{eq:twomark}
\end{eqnarray}
\end{lemma}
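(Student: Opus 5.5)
The plan is to read $D(n,k)$ and $U(n,k)$ as counts of marked networks and then run the root-split bijection of Lemma~\ref{lem:trace} and Proposition~\ref{prop:split} on these marked objects, in the same way Proposition~\ref{prop:forcing} was obtained.

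\emph{Marked objects.} By Proposition~\ref{prop21}, a network in $\TC(n,k-1)$ has $n-k$ free tree nodes, and hence $n-k+1$ free nodes when the root is counted. So $D(n,k)=(n-k)(n-k+1)a_{n,k-1}$ counts pairs $(N,(f_1,f_2))$ with $N\in\TC(n,k-1)$ and $(f_1,f_2)$ an ordered pair of distinct free nodes of $N$. Equivalently, it counts networks with an ordered pair of distinct free edges at distinct free nodes, modulo the factor $2$ bookkeeping that the kernels already carry. Likewise $U(n,k)=(n-k)a_{n,k}$ counts pairs $(N,f)$ with $N\in\TC(n,k)$ and $f$ a free node (root included). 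The intuition to follow is that ``$D$ at level $k$'' is a network with one reticulation fewer, together with the two free slots at which a $k$-th reticulation could be created.

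\emph{Root split of a marked network.} Take $(N,(f_1,f_2))$ counted by $D(n,k)$ and apply a root split to $N$. This gives $A_{src}\in\TC(m,i')$, $A_{tgt}\in\TC(p,j)$ and $c'$ cross edges with $i'+j+c'=k-1$. I will track where the free nodes of $N$ come from in $A_{src}$ and $A_{tgt}$:
\begin{itemize}
\item a free node of $A_{src}$ stays free, since subdivisions by tails $g$ only insert tree nodes $g$, which are themselves never free;
\item a free node of $A_{tgt}$ stays free exactly when it was not chosen;
\item the merged node $w$ is free only in case \textbf{T} with no cross edge leaving the source root edge.
\end{itemize}
This splits the count into three cases.
\begin{itemize}
\item[(i)] Both marks lie on the source side. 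Set $i=i'+1$. This should produce $K_1D(m,i)a_{p,j}$, because $D(m,i)$ marks two free nodes of $A_{src}$. The shift $\rise{s}{c}\to\rise{(s-1)}{c}$ must come from the marked positions; since $s=2m+i-1$ increases by one when $i$ does, the expected source-edge count is restored.
\item[(ii)] Both marks lie on the target side. This should give $K_2a_{m,i}D(p,j+1)$. Here $\binom{t-1}{c}$ reflects that the chosen free nodes must avoid one slot reserved by the marks.
\item[(iii)] One mark lies on each side. This should give $K_3U(m,i)a_{p,j}$. The factor $c\,\rise{s}{c-1}$ is read as: one of the $c$ chosen target free nodes is the target mark and receives no cross edge, so only $c-1$ source edges are chosen, and the source mark is a free node counted by $U$.
\end{itemize}
I would first fix the precise marked-object model, possibly re-indexing $c\mapsto c-1$ in case (iii). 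I would then verify the exact factors against the algebraic consequences of Lemma~\ref{lem:kernelid}.

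\emph{Multiplicity.} As in Proposition~\ref{prop:split}, each marked network is counted twice: once per root split in case \textbf{T}, and with weight $2$ in case \textbf{R}. This gives the left side $2D(n,k)$.

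\emph{Main obstacle.} The hard step is the exact correspondence between free nodes of $N$ and free nodes of the parts. The root of $N$, the merged node $w$, and the roots of $A_{src}$ and $A_{tgt}$ (which are free and may be chosen) all break a naive bijection. If the direct bijection resists, I would fall back to an algebraic check. Apply Proposition~\ref{prop:split} at $(n,k-1)$, multiply by $(n-k)(n-k+1)$, split this weight as $\bigl[(m-i')+(p-j)\bigr]\bigl[(m-i')+(p-j)-1\bigr]$ into source--source, target--target and mixed parts, and match each part to the corresponding kernel using Lemma~\ref{lem:kernelid}.
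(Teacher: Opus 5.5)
Your fallback is the paper's route: apply Proposition~\ref{prop:split} at $(n,k-1)$, multiply by the constant $(n-k)(n-k+1)$, split into source--source, target--target and mixed parts, and reindex. However, the split you wrote is not an identity, and it is the step that carries the whole proof. Write $A_{src}\in\TC(m,i')$ and $A_{tgt}\in\TC(p,j')$ with $i'+j'+c'=k-1$. Then $(m-i')+(p-j')=n-k+1+c'$, so $\bigl[(m-i')+(p-j')\bigr]\bigl[(m-i')+(p-j')-1\bigr]\ne(n-k)(n-k+1)$ whenever $c'>0$. The right variables are $x=m-i'$ and $y=p-j'-c'$, which give $x+y=n-k+1$ and $(n-k)(n-k+1)=x(x-1)+y(y-1)+2xy$. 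Combinatorially, $y$ counts the free nodes of $A_{tgt}$ that receive no cross edge. Your own second bullet says exactly this, yet your weight uses all $p-j'$ of them. With $p-j'$ in place of $y$ and the shift $j=j'+1$ (so $t=p-j$), the target--target term becomes $t(t+1)\binom{t+1}{c}$ instead of the required $t(t+1)\binom{t-1}{c}$, and the match with $K_2$ fails. Moreover, Lemma~\ref{lem:kernelid} is not the tool for the matching, since it relates kernels at the same indices. What is needed is the three shifts $(i,j,c)=(i'+1,j',c')$, $(i',j'+1,c')$, $(i',j',c'+1)$ for the three terms, together with $(t+1-c)(t-c)\binom{t+1}{c}=t(t+1)\binom{t-1}{c}$, $(t-c+1)\binom{t}{c-1}=c\binom{t}{c}$, and $2\cdot2^{c-1}=2^c$.

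Your primary, combinatorial route is left at the level of ``should produce''. The one structural claim it makes about the merged node is also wrong. A tail $g$ on the source root edge is a tree node, so it cannot stop $w$ from being free; $w$ is free exactly when the root of $A_{tgt}$ is not chosen, i.e.\ in case \textbf{T}. The bookkeeping you flagged as the main obstacle resolves once you identify the root of $N$ with the root of $A_{src}$, and $w$ with the root of $A_{tgt}$. The free nodes of $N$ are then the disjoint union of the $x$ free nodes of $A_{src}$ and the $y$ unchosen free nodes of $A_{tgt}$. Your cases (i)--(iii) become precisely the $x(x-1)$, $y(y-1)$ and $2xy$ terms. Note also that the weight $(n-k)(n-k+1)$ is the same for every network in $\TC(n,k-1)$, so multiplying Proposition~\ref{prop:split} by it already handles the multiplicity-two convention. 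No bijection on marked networks is needed, which is why the paper can treat $x+y=n-k+1$ as a purely algebraic consequence of $m+p=n$ and $i'+j'+c'=k-1$.
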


\begin{proof} 
Applying the root-split identity at $(n,k-1)$ gives
\begin{eqnarray*}
2a_{n,k-1}
=
\sum_{\substack{m+p=n\\ \alpha+\beta+r=k-1}}
\binom{n}{m}2^r
\binom{p-\beta}{r}
(2m+\alpha-1)^{\overline r}
a_{m,\alpha}a_{p,\beta}.
\end{eqnarray*}
Hence,
\begin{eqnarray}
2D(n,k)
&={}&
\sum_{\substack{m+p=n\\ \alpha+\beta+r=k-1}}
\binom{n}{m}2^r
\binom{p-\beta}{r}
(2m+\alpha-1)^{\overline r}  \nonumber\\
& &\qquad{}\times
(n-k)(n-k+1)a_{m,\alpha}a_{p,\beta}.
\label{eqn133}
\end{eqnarray}

For a summand in (\ref{eqn133}), put $
x=m-\alpha,\; y=p-\beta-r$.
Since $m+p=n$ and $\alpha+\beta+r=k-1$, we have 
$x+y=n-k+1$.
Therefore,
\[
(n-k)(n-k+1)
=(x+y-1)(x+y)
=x(x-1)+y(y-1)+2xy.
\]
Substituting this into (\ref{eqn133}), we obtain

\begin{eqnarray}
2D(n,k)
&={}&
\sum_{\substack{m+p=n\\ \alpha+\beta+r=k-1}}
\binom{n}{m}2^r
\binom{p-\beta}{r}
(2m+\alpha-1)^{\overline r}  \nonumber \\
& &\qquad{}\times
\bigl[x(x-1)+y(y-1)+2xy\bigr]
a_{m,\alpha}a_{p,\beta}.
\label{eqn144}
\end{eqnarray}

We reindex the three terms in (\ref{eqn144}) separately.

For the term $x(x-1)$, set
\[
i=\alpha+1,\qquad j=\beta,\qquad c=r.
\]
Then $i+j+c=k$ and
$D(m,i)=x(x-1)a_{m,\alpha}$.
Moreover, $2m+\alpha-1=2m+i-2=s-1$.
Thus this part of (\ref{eqn144}) becomes
\[
\sum_{\substack{m+p=n\\i+j+c=k}}
K_1D(m,i)a_{p,j}.
\]

For the term $y(y-1)$, set
\[
i=\alpha,\qquad j=\beta+1,\qquad c=r.
\]
Then $i+j+c=k$. Since $t=p-j=p-\beta-1$, we have
$y=p-\beta-r=t+1-c$.
Using
\[
(t+1-c)(t-c)\binom{t+1}{c}
=
t(t+1)\binom{t-1}{c}
\]
and $D(p,j)=t(t+1)a_{p,\beta}$,
this part becomes
\[
\sum_{\substack{m+p=n\\i+j+c=k}}
K_2a_{m,i}D(p,j).
\]

Finally, for the term $2xy$, set
\[
i=\alpha,\qquad j=\beta,\qquad c=r+1.
\]
Then $i+j+c=k$, $x=m-i$, and $y=t-c+1$.
Using
\[
(t-c+1)\binom{t}{c-1}
=
c\binom{t}{c}
\]
together with $2\cdot2^{c-1}=2^c$, this part becomes
\[
\sum_{\substack{m+p=n\\i+j+c=k}}
K_3U(m,i)a_{p,j},
\]
where $U(m,i)=(m-i)a_{m,i}$.

 Consequently, \eqref{eq:twomark} holds.
\end{proof}

\begin{proposition}\label{prop:candidate}
For $n\ge2$ and $0\le k<n$,
\begin{equation}\label{eq:candrec}
  2\,\Phi(n,k)=\sum_{\substack{m+p=n\\ i+j+c=k}}
  \Bigl[K_1\,\Phi(m,i)\,a_{p, j}+K_2\,a_{m, i}\,\Phi(p,j)
  +\bigl(mK_3+pK_4\bigr)a_{m, i}\,a_{p, j}\Bigr].
\end{equation}
\end{proposition}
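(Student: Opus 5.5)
Since $\Phi(n,k)=k\,a_{n,k}+D(n,k)$, and the recurrence \eqref{eq:candrec} is linear in the "marked" quantity (the first two summands are linear in $\Phi$ with the same kernels $K_1,K_2$), I will split $\Phi=D+k\,a$ and combine Lemma~\ref{lem:twomark} with a corresponding identity for $k\,a_{n,k}$. Lemma~\ref{lem:twomark} already produces $K_1D(m,i)a_{p,j}+K_2a_{m,i}D(p,j)+K_3U(m,i)a_{p,j}$. So it suffices to prove
\[
2k\,a_{n,k}=\sum_{\substack{m+p=n\\ i+j+c=k}}\Bigl[K_1\,i\,a_{m,i}a_{p,j}+K_2\,j\,a_{m,i}a_{p,j}+\bigl(mK_3+pK_4-K_3(m-i)\bigr)a_{m,i}a_{p,j}\Bigr].
\]
Adding this to \eqref{eq:twomark} then gives \eqref{eq:candrec} termwise. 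The bracket simplifies to $iK_1+jK_2+iK_3+pK_4$.

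For the remaining identity I start from Proposition~\ref{prop:split}, multiply \eqref{eq:formB} by $k=i+j+c$, and distribute:
\[
2k\,a_{n,k}=\sum K_0\,(i+j+c)\,a_{m,i}a_{p,j}.
\]
Lemma~\ref{lem:kernelid} gives $iK_0=i(K_1+K_3)$, which matches the $iK_1+iK_3$ terms exactly. It also gives $jK_0=jK_2+jK_4$ and $cK_0=tK_4=(p-j)K_4$. Summing, $jK_0+cK_0=jK_2+pK_4$, which matches the remaining terms. So the $k a$ identity is an immediate termwise consequence of \eqref{eq:formB} and Lemma~\ref{lem:kernelid}, with no reindexing needed.

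The only delicate point is the boundary conventions. Lemma~\ref{lem:twomark} is stated for $k\ge1$. At $k=0$ we have $D(n,0)=0$ since $a_{n,-1}=0$, and the right side of \eqref{eq:twomark} vanishes: $D(m,0)=D(p,0)=0$, and $K_3=0$ when $c=0$. Hence \eqref{eq:twomark} also holds trivially at $k=0$. I also need the degenerate terms (e.g.\ $m=1$ or $p=1$) to be consistent with $\Phi(1,0)=0$ and $a$ vanishing outside its range. These are summands where both sides use the same conventions, so they are handled automatically. I expect this bookkeeping, rather than any algebra, to be the main thing to check carefully.
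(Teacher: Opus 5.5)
Your proposal is correct and is essentially the paper's own proof: write $\Phi(n,k)=k\,a_{n,k}+D(n,k)$, use Lemma~\ref{lem:twomark} for the $D$ part, multiply \eqref{eq:formB} by $k=i+j+c$, and regroup termwise using $K_0=K_1+K_3$, $K_0=K_2+K_4$ and $cK_0=tK_4$ (so that $jK_4+cK_0=pK_4$). The only cosmetic difference is at $k=0$: you note that \eqref{eq:twomark} holds trivially there and run the same argument, whereas the paper checks directly that both sides of \eqref{eq:candrec} vanish.
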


\begin{proof}
For $k=0$, both sides vanish. Indeed, $D(n,0)=0$, so
$\Phi(n,0)=0$, and every summand has $c=0$, hence
$K_3=K_4=0$ and $\Phi(m,0)=\Phi(p,0)=0$.

Assume now that $k\ge 1$. Multiplying the root-split identity
\[
2a_{n,k}
=
\sum_{\substack{m+p=n\\ i+j+c=k}}
K_0a_{m,i}a_{p,j}
\]
by $k=i+j+c$ gives
\begin{equation}
2k a_{n,k}
=
\sum_{\substack{m+p=n\\ i+j+c=k}}
(i+j+c)K_0a_{m,i}a_{p,j}.
\label{eqn211}
\end{equation}

We now add   \eqref{eq:twomark} and (\ref{eqn211}). Since
\[
\Phi(n,k)=k a_{n,k}+D(n,k),
\]
the left-hand side is $2\Phi(n,k)$.

For each summand, the source-side terms are
\[
iK_0a_{m,i}+K_1D(m,i)+K_3U(m,i).
\]
Using $K_0=K_1+K_3$ and
$U(m,i)=(m-i)a_{m,i}$, this equals
\[
\begin{aligned}
&K_1\bigl(i a_{m,i}+D(m,i)\bigr)
 +K_3\bigl(i a_{m,i}+(m-i)a_{m,i}\bigr)\\
&\qquad =
K_1\Phi(m,i)+mK_3a_{m,i}.
\end{aligned}
\]

Similarly, the target-side and crossing terms are
\[
jK_0a_{p,j}+K_2D(p,j)+cK_0a_{p,j}.
\]
Since $K_0=K_2+K_4$, this equals
\[
K_2\Phi(p,j)+(jK_4+cK_0)a_{p,j}.
\]
By $cK_0=tK_4$ and $t=p-j$,
\[
jK_4+cK_0=(j+t)K_4=pK_4.
\]
Hence the target-side and crossing terms become
\[
K_2\Phi(p,j)+pK_4a_{p,j}.
\]

Combining the two parts gives identity (\ref{eq:candrec})
as desired.
\end{proof}

\begin{theorem}
Identity (\ref{eqn66}) in Theorem~\ref{thm:52} holds. Hence, the
Pons--Batle identity holds.
\end{theorem}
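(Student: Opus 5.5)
We will prove that $H(n,k)=\Phi(n,k)$ for all $n\ge1$ and $0\le k<n$ by strong induction on $n$. The point is that Proposition~\ref{prop:forcing} and Proposition~\ref{prop:candidate} give the \emph{same} recurrence for $H$ and for $\Phi$: the coefficients $K_1,K_2$ and the inhomogeneous term $(mK_3+pK_4)a_{m,i}a_{p,j}$ coincide. Every value on the right-hand side involves only strictly smaller numbers of taxa $m,p<n$, since $m,p\ge1$ and $m+p=n$. So once the base case agrees, agreement propagates.

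\textbf{Base case $n=1$.} Here only $k=0$ is allowed, and $\TC(1,0)$ consists of the single network made of a root and one leaf. That leaf has no reticulation parent and no sibling, so it is free. Hence $H(1,0)=0$. On the other side, $\Phi(1,0)=0\cdot a_{1,0}+1\cdot2\cdot a_{1,-1}=0$, because $a_{1,-1}=0$ by convention. So $H(1,0)=\Phi(1,0)$.

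\textbf{Inductive step.} Fix $n\ge2$ and $0\le k<n$, and assume $H(m',i')=\Phi(m',i')$ for all $m'<n$ and all admissible $i'$. Subtract \eqref{eq:candrec} from \eqref{eq:forcing}. The $(mK_3+pK_4)a_{m,i}a_{p,j}$ terms cancel, leaving
\[
2\bigl(H(n,k)-\Phi(n,k)\bigr)=\sum_{\substack{m+p=n\\ i+j+c=k}}\Bigl[K_1\bigl(H(m,i)-\Phi(m,i)\bigr)a_{p,j}+K_2\,a_{m,i}\bigl(H(p,j)-\Phi(p,j)\bigr)\Bigr].
\]
Each difference on the right vanishes by the induction hypothesis, so $H(n,k)=\Phi(n,k)$. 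That is identity \eqref{eqn66}, and Theorem~\ref{thm:52} then yields the Pons--Batle identity.

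\textbf{Main point to check.} The step needing care is that the index ranges in the two propositions really match. Terms with an inadmissible index, such as $i\ge m$ or $j\ge p$, must be zero on both sides. For $H$ this holds because the $\TC$ sets are empty; for $\Phi$ it holds because $a$ vanishes there. The one exception to check is $\Phi(m,m)$ with $i=m$, where $k=m$ is out of range. There $\Phi(m,m)=m\,a_{m,m}+0\cdot a_{m,m-1}=0$, matching $H(m,m)=0$. So the hypothesis is used only on admissible pairs, or on pairs where both sides are zero, and the induction is valid.
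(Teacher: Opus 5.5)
Your proposal is correct and is essentially the paper's own proof. Both argue by strong induction on $n$ from the base case $(1,0)$, using that Propositions~\ref{prop:forcing} and~\ref{prop:candidate} have identical right-hand sides involving only $m,p<n$. Your extra check that $H(m,i)=\Phi(m,i)=0$ whenever $i\ge m$ (for $i=m$ because the factor $(m-i)(m-i+1)$ vanishes) makes explicit a point the paper leaves implicit when it invokes the hypothesis ``for every reticulation number.''
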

\begin{proof}
We show $H(n,k)=\Phi(n,k)$ by strong induction on $n$. For $n=1$ the only admissible pair
is $(1,0)$ and both sides are $0$. Let $n\ge2$ and assume the equality for every smaller
leaf number and every reticulation number. By Lemma~\ref{lem:legal}(d) every nonzero
summand of \eqref{eq:forcing} and \eqref{eq:candrec} has $m,p\ge1$ with $m+p=n$, hence
$m,p<n$, so the induction hypothesis gives $H(m,i)=\Phi(m,i)$ and $H(p,j)=\Phi(p,j)$
throughout. Propositions~\ref{prop:forcing} and~\ref{prop:candidate} then have identical
right-hand sides, so $2H(n,k)=2\Phi(n,k)$.
\end{proof}


\begin{thebibliography}{10}

\bibitem{Cardona_09b}
Gabriel Cardona, Francesc Rossello, and Gabriel Valiente.
\newblock Comparison of tree-child phylogenetic networks.
\newblock {\em IEEE/ACM Trans Comput. Biol. and Bioinform.}, 6(4):552--569, 2009.

\bibitem{YFuchs_2026}
Michael Fuchs and Hao Yu.
\newblock A cube-root phase transition in tree-child networks and the enumeration threshold for galled networks.
\newblock {\em arXiv preprint arXiv:2608.20860}, 2026.

\bibitem{huson_book}
Daniel~H Huson, Regula Rupp, and Celine Scornavacca.
\newblock {\em Phylogenetic networks: concepts, algorithms and applications}.
\newblock Cambridge University Press, 2010.

\bibitem{Conjecture_proof}
Zhicong Lin, Feihu Liu, Jiahang Liu, Jing Liu, and Guoce Xin.
\newblock Proof of a conjecture on {Young} tableaux with walls.
\newblock {\em arXiv preprint arXiv:2601.09551}, 2026.

\bibitem{liu2026combinatorial}
Hexuan Liu, Michael Wallner, and Guan-Ru Yu.
\newblock A combinatorial framework for the {Pons}-{Batle} identity: Young tableaux, lattice paths, and limit laws.
\newblock {\em arXiv preprint arXiv:2605.07587}, 2026.

\bibitem{Conjecture_Pons}
Miquel Pons and Josep Batle.
\newblock Combinatorial characterization of a certain class of words and a conjectured connection with general subclasses of phylogenetic tree-child networks.
\newblock {\em Scientific reports}, 11(1):21875, 2021.

\bibitem{steel_book}
Mike Steel.
\newblock {\em Phylogeny: Discrete and Random Processes in Evolution}.
\newblock SIAM, 2016.

\bibitem{YZhang_2026}
Hao Yu and Louxin Zhang.
\newblock Asymptotic counting of binary phylogenetic networks.
\newblock {\em arXiv preprint arXiv:2605.23126}, 2026.

\bibitem{Zhang2019clusters}
Louxin Zhang.
\newblock Clusters, trees, and phylogenetic network classes.
\newblock In {\em Bioinformatics and Phylogenetics: Seminal Contributions of Bernard Moret}, pages 277--315. Springer, 2019.

\bibitem{zhang2026phylofusion}
Louxin Zhang, Banu Cetinkaya, and Daniel~H Huson.
\newblock Phylofusion—fast and easy fusion of rooted phylogenetic trees into rooted phylogenetic networks.
\newblock {\em Systematic Biology}, 75(1):88--99, 2026.

\end{thebibliography}

\end{document}